\newcommand{\R}{{\mathbb R}}
\newcommand{\el}{\end{list}}
\newtheorem{algorithm}[theorem]{Algorithm}
\newtheorem{exmp}{Example}
\newtheorem{rmk}{Remark}
\newcommand{\balg}{\begin{algorithm}}
\newcommand{\ealg}{\end{algorithm}}
\newcommand{\bl}{\begin{list}{ \ }{
\leftmargin=.325in}}
\title{On the banded Toeplitz structured distance to symmetric positive 
semidefiniteness}\medskip
\author{
Silvia Noschese\thanks{Dipartimento di Matematica ``Guido Castelnuovo'', SAPIENZA 
Universit\`a di Roma, P.le A. Moro, 2, I-00185 Roma, Italy. E-mail: 
{\tt noschese@mat.uniroma1.it}. Research partially supported by a grant from SAPIENZA 
Universit\`a di Roma and by INdAM-GNCS.}
\and
Lothar Reichel\thanks{Department of Mathematical Sciences, Kent State University, Kent,
OH 44242, USA. E-mail: {\tt reichel@math.kent.edu}. Research partially supported by 
NSF grant DMS-1729509.}
}
\begin{document}
%\today
\maketitle

\begin{abstract}
This paper is concerned with the determination of a close real banded positive definite 
Toeplitz matrix in the Frobenius norm to a given square real banded matrix. While it is 
straightforward to determine the closest banded Toeplitz matrix to a given square matrix, the 
additional requirement of positive definiteness makes the problem difficult. We review 
available theoretical results and provide a simple approach to determine a banded
positive definite Toeplitz matrix.
\end{abstract}

\keywords
Matrix nearness problem, Toeplitz structure, symmetric positive definite matrix, 
structured distance, banded Toeplitz matrix 
\endkeywords

\section{Introduction}
Matrix nearness problems are the focus of much research in linear algebra; see, e.g., 
\cite{Dml87,Dml90,Hen,Hig,Ruh}. In particular, characterizations of the algebraic variety 
of normal matrices and distance measures to this variety have received considerable 
attention \cite{ElsIkr,ep,FKKL,Fr,GJSW,It,las,l1,l2,Ruh87}. Normal Toeplitz 
matrices are characterized in \cite{FKKL,GP,It}, and the distance of Toeplitz matrices to
the algebraic variety of normal Toeplitz matrices, measured with the Frobenius norm and 
referred to as the \emph{Toeplitz structured distance to normality}, is investigated in 
\cite{NR11}, where also an application to preconditioning is described. The present paper 
is concerned with the distance of banded Toeplitz matrices to the variety of similarly 
structured positive semidefinite matrices, and with determining the closest matrix in this
variety. 

We denote banded Toeplitz matrices in ${\R}^{n\times n}$ with bandwidth $2k+1$ by
\begin{equation}\label{BTM}
T_{(k)}=(n;k;\sigma,\delta,\tau)=
\left[
\begin{array}{cccccccc}
\delta & \tau_1  &\tau_2\ &\dots\ &\tau_{k}\ & \ & \ & \mbox{\huge 0} \\
\sigma_1 & \delta & \tau_1 &\ \ &\ &\ \ & \ &\ \\
\sigma_2& \sigma_1 & \ddots &\ & \ \ & \ &\ddots &\ \\
\vdots &\ &\ &\ &\ddots &\  \ & \ & \tau_{k}\ \\
\sigma_k&\ &\ddots &\ddots &\ddots \ & \ & \ & \vdots\  \\
&\ &\ &\ & \ &\ &\tau_1  & \tau_2 \\
& \ &\ddots &\ &\ &\sigma_1 &\delta & \tau_1 \\
\mbox{\huge 0} &\  & \ &\sigma_k\ &\dots\ &\sigma_2 & \sigma_1 &
\delta
\end{array}
\right].
\end{equation}
Some of the scalars $\sigma_j$, $\delta$, and $\tau_j$ may vanish. We say that the matrix 
(\ref{BTM}) has bandwidth $2k+1$, or equivalently, is $(2k+1)$-banded, even if $\sigma_k$ 
or $\tau_k$ vanish. All Toeplitz matrices in this paper are real. Banded Toeplitz matrices 
arise in many applications including signal processing, time-series analysis, and 
numerical methods for the solution of differential equations; see, e.g., 
\cite{BG,FGHLW,GS,Ka,TE}. Low-rank modifications of symmetric banded Toeplitz matrices are
considered in \cite{LP}.

Necessary and sufficient conditions for a banded Toeplitz matrix \eqref{BTM} to be normal 
are given in \cite{NR09}, where also the distance of $(2k+1)$-banded Toeplitz matrices of 
order $n$, with $k$ less than or equal to the integer part of $n/2$, to the algebraic 
variety of normal Toeplitz matrices of the same bandwidth, is investigated. The distance 
is measured with the Frobenius norm. Since the given matrix \eqref{BTM} and the closest 
normal Toeplitz matrix both are banded Toeplitz matrices, we refer to their distance as 
the \emph{banded Toeplitz structured distance to normality}. Whether this distance measure
is more meaningful than the Toeplitz structured distance depends on the application. 

Let $A=[a_{ij}]_{i,j=1}^n\in\R^{n\times n}$ and define the Frobenius norm 
\[
\|A\|_F=\sqrt{\sum_{i,j=1}^n a_{ij}^2}.
\]
The distance from $A$ to the set of symmetric positive semidefinite $n\times n$ 
matrices in the Frobenius norm is given by 
\begin{equation}\label{minE}
\delta_F^+(A):=\min\{ \|E\|_F:~E\in\R^{n\times n},~ A+E \text{ symmetric positive 
semidefinite} \}.
\end{equation}
This distance can be expressed as
\begin{equation}\label{deltaF}
\delta_F^+(A)= \sqrt{\sum_{\lambda_i(B)<0}\lambda_{i}(B)^{2}+\|C\|_F^2},
\end{equation}
where $B$ and $C$ are the symmetric and skew-symmetric parts of $A$, respectively, and 
$\lambda_1(B),\lambda_2(B),\dots, \lambda_n(B)$ are the eigenvalues of $B$. The nearest 
symmetric positive semidefinite matrix in the Frobenius norm is $A_+:=(B + H)/2$, with $H$
the symmetric polar factor of $B$ defined as follows: Consider the spectral factorization 
$B= Z\diag(\lambda_i(B))Z^T$  with $Z$ an orthogonal matrix. Then 
$H=Z\diag(|\lambda_i(B)|)Z^T$; see Higham \cite[Theorem 2.1]{Hig88}. Unfortunately, 
neither the matrices $A+E$ nor $A_+$ generally have the same structure as $A$. 
In fact, the polar factor of a symmetric banded Toeplitz 
matrix typically is neither Toeplitz nor banded.

We are interested in determining the {\it banded Toeplitz structured distance to symmetric 
positive semidefiniteness}, $\Delta_F^{+}$, as well as the projection, $T^{+}_{(k)}$, of a 
given banded Toeplitz matrix $T_{(k)}$ in the set of the similarly structured symmetric 
positive semidefinite matrices. The matrix $T^{+}_{(k)}$ has potential application to
preconditioning; see Section \ref{sec6}.  Our next example shows that the closest Toeplitz 
matrix in the Frobenius norm to a symmetric positive definite matrix might not be symmetric 
positive definite. Therefore, it is not straightforward to determine $\delta_F^+(A)$ and 
$\Delta_F^{+}$ even in this special situation.

\begin{exmp}\label{ex00}
Let 
\[
A=\left[\begin{array}{ccc} 100 & 99 & 0 \\ 99 & 100 & 1/2 \\ 0 & 1/2 & 1
\end{array}\right]. 
\]
The spectrum of $A$ is approximately $\{0.6461,~1.3532,~199.0006\}$. Thus, $A$ is 
symmetric positive definite. The closest Toeplitz  matrix in the Frobenius norm to $A$ is
obtained by averaging the  entries  of $A$ on every diagonal,
\[
T=\left[\begin{array}{ccc} 67 & 49.75 & 0 \\ 49.75 & 67 & 49.75 \\ 0 & 49.75 & 67
\end{array}\right],
\]
and has the spectrum $\{-3.3571,~67.0000,~137.357\}$. Hence, $T$ is symmetric indefinite.
\end{exmp}

This paper is organized as follows. Section \ref{sec2} discusses the structured distance 
of banded Toeplitz matrices to normality in the Frobenius norm and Section \ref{sec3} is 
concerned with the structured distance to the set of positive semidefinite Toeplitz 
matrices. Also an approach to inexpensively determine a banded symmetric positive definite
matrix that is close to a given banded Toeplitz matrix is described. The special case of 
tridiagonal matrices is considered in Section \ref{sec4}. 
Some remarks on the application of the symmetric positive definite banded Toeplitz matrices 
determined in Sections \ref{sec3} and \ref{sec4} to the solution of linear systems of 
equations are provided in Section \ref{sec6}. Concluding remarks can be found in Section 
\ref{sec8}.

\section{Structured distance to normality}\label{sec2}
This section reviews notation and results from \cite{NPR13,NR09} that will be useful in 
the sequel. 

\begin{theorem}(\cite{NR09})\label{T1}
The real $(2k+1)$-banded Toeplitz matrix $T_{(k)}=(n;k;\sigma,\delta,\tau)$ of order $n$
with $k\leq\lfloor n/2\rfloor$ is normal if and only if it is either symmetric or shifted 
skew-symmetric (i.e., obtained by adding to a skew-symmetric matrix a multiple of the 
identity). Consider the sum
\begin{equation}\label{Tcond}
\sum_{h=1}^{k}(n-h)\sigma_{h}\tau_{h}
\end{equation}
associated with the matrix $T_{(k)}=(n;k;\sigma,\delta,\tau)$. If this sum is positive, 
then the projection of $T_{(k)}$ onto the algebraic variety of similarly structured normal
matrices is the real symmetric $(2k+1)$-banded Toeplitz matrix
\[
T_{1,(k)}^{*}=(n;k;\frac{\sigma+\tau}{2},\delta,\frac{\sigma+\tau}{2}).
\]
If, instead, the sum (\ref{Tcond}) is negative, then the projection of $T_{(k)}$ onto the
algebraic variety of similarly structured normal matrices is the real shifted
skew-symmetric $(2k+1)$-banded Toeplitz matrix
\[
T_{2,(k)}^{*}=(n;k;\frac{\sigma-\tau}{2},\delta,\frac{\tau-\sigma}{2}).
\]
Finally, if the sum (\ref{Tcond}) vanishes, then both matrices $T_{1,(k)}^{*}$ and 
$T_{2,(k)}^{*}$ are closest matrices to $T_{(k)}$ in the algebraic variety of similarly 
structured normal matrices. Moreover, the squared banded Toeplitz structured distance to
normality from $T_{(k)}$ is 
\[
\Delta_F(T_{(k)})^2=\frac{1}{2}\min\left\{\sum_{j=1}^k(n-j)(\sigma_j-\tau_j)^2,
\sum_{j=1}^k(n-j)(\sigma_j+\tau_j)^2\right\}.
\]
\end{theorem}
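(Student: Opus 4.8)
The plan is to handle the two assertions separately: first the characterization (normal $\iff$ symmetric or shifted skew-symmetric), and then the projection and distance formula, which will follow mechanically once the variety of normal banded Toeplitz matrices is identified as the union of two linear subspaces. For the characterization I would study the commutator $G := T_{(k)}T_{(k)}^T - T_{(k)}^T T_{(k)}$, which vanishes exactly when $T_{(k)}$ is normal. Writing $T_{(k)} = \delta I + N$, where $N$ carries only the off-diagonal bands (sub-diagonals $\sigma_j$, super-diagonals $\tau_j$), one checks that the scalar shift cancels, $G = NN^T - N^T N$, so $\delta$ is irrelevant, consistent with the appearance of the \emph{shifted} skew-symmetric case. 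Denoting by $t_p$ the constant entry on the $p$-th diagonal ($t_{-p}=\tau_p$ and $t_p=\sigma_p$ for $p>0$), a direct expansion gives
\[
G_{ij}=\sum_{p=i-n}^{i-1}t_p t_{p-d}-\sum_{p=1-j}^{n-j}t_p t_{p-d},\qquad d=i-j .
\]
Since the two summands coincide as functions of $p$, the ``bulk'' contributions cancel and $G$ is supported only near the two corners; moreover, the persymmetry $J T_{(k)} J = T_{(k)}^T$ (with $J$ the exchange matrix) gives $JGJ=-G$, so the bottom-right corner is redundant and it suffices to analyze the leading $k\times k$ corner. Using $k\le\lfloor n/2\rfloor$ to control the truncation of the summation ranges, the corner equations $G_{ij}=0$ reduce to
\[
\sum_{a=j}^{k}\tau_a\tau_{a+i-j}=\sum_{p=i}^{k}\sigma_p\sigma_{p-i+j},\qquad 1\le i,j\le k .
\]

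The crux is to solve this nonlinear system. I would fix the lag $c=i-j\ge 0$ and run a downward induction in $j$: the equations telescope, so each isolates $\tau_a\tau_{a+c}=\sigma_a\sigma_{a+c}$, and letting $c$ range over $0,1,\dots,k-1$ yields $\tau_a\tau_b=\sigma_a\sigma_b$ for every pair $1\le a,b\le k$. Equivalently, the coefficient vectors $\sigma=(\sigma_1,\dots,\sigma_k)$ and $\tau=(\tau_1,\dots,\tau_k)$ satisfy $\tau\tau^T=\sigma\sigma^T$. Comparing these rank-one symmetric matrices forces $\tau=\pm\sigma$, i.e. either $\tau_j=\sigma_j$ for all $j$ (symmetric) or $\tau_j=-\sigma_j$ for all $j$ (shifted skew-symmetric). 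The reverse implications are immediate, since symmetric and shifted skew-symmetric matrices are always normal; this completes the characterization.

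For the projection and distance I would use that the normal $(2k+1)$-banded Toeplitz matrices form the union of the subspace of symmetric ones and the subspace of shifted skew-symmetric ones, and compute the orthogonal Frobenius projection onto each. Both projections decouple over diagonals: on the $j$-th pair of off-diagonals, each carrying $n-j$ equal entries, minimizing $(n-j)[(\sigma_j-s_j)^2+(\tau_j-s_j)^2]$ over the symmetric value $s_j$ gives $s_j=(\sigma_j+\tau_j)/2$, hence $T_{1,(k)}^*$, with residual $\frac{1}{2}(n-j)(\sigma_j-\tau_j)^2$; minimizing $(n-j)[(\sigma_j+w_j)^2+(\tau_j-w_j)^2]$ over the skew value $w_j$ gives $w_j=(\tau_j-\sigma_j)/2$, hence $T_{2,(k)}^*$, with residual $\frac{1}{2}(n-j)(\sigma_j+\tau_j)^2$; in both cases the diagonal stays at $\delta$. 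Summing the residuals yields the two candidate squared distances $\frac{1}{2}\sum_j(n-j)(\sigma_j-\tau_j)^2$ and $\frac{1}{2}\sum_j(n-j)(\sigma_j+\tau_j)^2$, whose difference equals $-2\sum_{h=1}^{k}(n-h)\sigma_h\tau_h$. Thus $T_{1,(k)}^*$ is the nearer projection exactly when \eqref{Tcond} is positive, $T_{2,(k)}^*$ when it is negative, and both are equidistant when it vanishes, while the distance to normality is the minimum of the two, which is the claimed formula.

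The main obstacle is the commutator computation together with its index bookkeeping: one must correctly track how the finite truncation cuts the summation ranges near the corner, verify that the bulk terms cancel, and organize the resulting corner equations so that the telescoping induction delivers the outer-product identity $\tau\tau^T=\sigma\sigma^T$. Once that identity is in hand, both the sign dichotomy $\tau=\pm\sigma$ and the distance formula are routine.
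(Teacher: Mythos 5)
Your proposal is correct: the localization of $G=T_{(k)}T_{(k)}^T-T_{(k)}^TT_{(k)}$ to the two $k\times k$ corners (which is exactly where the hypothesis $k\le\lfloor n/2\rfloor$ is needed), the reduction via symmetry of $G$ and skew-persymmetry $JGJ=-G$ to the single corner system, the downward telescoping in $j$ at fixed lag $c$ yielding $\tau_a\tau_b=\sigma_a\sigma_b$ for all pairs and hence $\tau\tau^T=\sigma\sigma^T$ and $\tau=\pm\sigma$, and the diagonal-wise Frobenius projections onto the two linear subspaces with squared-residual difference $-2\sum_{h=1}^{k}(n-h)\sigma_h\tau_h$ all check out. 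Note that the present paper states Theorem \ref{T1} without proof, citing \cite{NR09}; your argument is in substance the proof of the cited source, which likewise proceeds by entrywise analysis of the self-commutator to characterize the normal banded Toeplitz matrices as the union of the symmetric and shifted skew-symmetric families and then compares the orthogonal projections onto these two subspaces.
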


Note that the matrices $T_{(k)}$ and 
$T_{(k)}-\delta I_n=(n;k;\sigma,0,\tau)\in \R^{n \times n}$ have the same 
banded Toeplitz structured distance to normality; however, they have different projections
onto  the algebraic variety of similarly structured normal matrices (at distance 
$\sqrt{n}|\delta|$).

Theorem \ref{T1} greatly simplifies in the tridiagonal case and the following result 
holds.

\begin{corollary}(\cite[Theorem 3.3]{NPR13})\label{T2}
The squared $3$-banded Toeplitz structured distance to normality from $T_{(1)}$ is 
\[
\Delta_F(T_{(1)})^2=
\frac{n-1}{2}\min\left\{(\sigma_1-\tau_1)^2,
(\sigma_1+\tau_1)^2\right\}=\frac{n-1}{2}\left| |\sigma_1|-|\tau_1| \right|^2.
\]
Moreover, the normal tridiagonal Toeplitz matrix closest to $T_{(1)}$ is the symmetric 
matrix  $T_{1,(1)}^{*}= (n;1;\frac{\sigma_1+\tau_1}{2},\delta,\frac{\sigma_1+\tau_1}{2})$
if $\sigma_1\tau_1\geq 0$, and the shifted skew-symmetric matrix 
$T_{2,(1)}^{*}= (n;1;\frac{\sigma_1-\tau_1}{2}, \delta,\frac{\tau_1-\sigma_1}{2})$ if 
$\sigma_1\tau_1\leq0$.
\end{corollary}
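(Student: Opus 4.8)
The plan is to specialize Theorem \ref{T1} to the case $k=1$, where $T_{(1)}=(n;1;\sigma,\delta,\tau)$ is determined by the two scalars $\sigma=\sigma_1$ and $\tau=\tau_1$. First I would evaluate the sum \eqref{Tcond}, which for $k=1$ collapses to the single term $(n-1)\sigma_1\tau_1$. Since $T_{(1)}$ is tridiagonal we have $n\geq 2$, so $n-1>0$ and the sign of \eqref{Tcond} coincides with the sign of the product $\sigma_1\tau_1$. Feeding this into Theorem \ref{T1} yields the stated dichotomy directly: when $\sigma_1\tau_1>0$ the closest normal matrix is the symmetric matrix $T_{1,(1)}^{*}$, when $\sigma_1\tau_1<0$ it is the shifted skew-symmetric matrix $T_{2,(1)}^{*}$, and when $\sigma_1\tau_1=0$ both attain the minimum. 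This accounts for the two overlapping cases $\sigma_1\tau_1\geq 0$ and $\sigma_1\tau_1\leq 0$ in the statement.

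For the distance formula, I would again specialize the expression in Theorem \ref{T1}. With $k=1$ the two competing sums each reduce to a single term, giving
\[
\Delta_F(T_{(1)})^2=\frac{n-1}{2}\min\left\{(\sigma_1-\tau_1)^2,(\sigma_1+\tau_1)^2\right\}.
\]
It then remains to verify the elementary identity $\min\{(\sigma_1-\tau_1)^2,(\sigma_1+\tau_1)^2\}=\big||\sigma_1|-|\tau_1|\big|^2$. The key observation is that $(\sigma_1+\tau_1)^2-(\sigma_1-\tau_1)^2=4\sigma_1\tau_1$, so the smaller of the two squares is $(\sigma_1-\tau_1)^2$ exactly when $\sigma_1\tau_1\geq 0$ and $(\sigma_1+\tau_1)^2$ exactly when $\sigma_1\tau_1\leq 0$. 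In either case, replacing $\mp 2\sigma_1\tau_1$ by $-2|\sigma_1||\tau_1|$ produces $\sigma_1^2-2|\sigma_1||\tau_1|+\tau_1^2=(|\sigma_1|-|\tau_1|)^2$, as desired.

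Since every ingredient is obtained by setting $k=1$ in a result already proved, there is no genuine obstacle here; the corollary is a direct specialization. The only point requiring minimal care is the algebraic identity for the distance, and even that reduces to tracking the sign of $\sigma_1\tau_1$ so as to match the sign convention used in the definition of $T_{1,(1)}^{*}$ and $T_{2,(1)}^{*}$.
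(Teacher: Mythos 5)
Your proposal is correct and matches the paper's intent exactly: the paper presents this corollary without a written proof, as a direct specialization of Theorem \ref{T1} to $k=1$ (citing \cite[Theorem 3.3]{NPR13}), which is precisely what you do. Your verification that the sum \eqref{Tcond} reduces to $(n-1)\sigma_1\tau_1$ and your sign-tracking derivation of $\min\{(\sigma_1-\tau_1)^2,(\sigma_1+\tau_1)^2\}=\bigl||\sigma_1|-|\tau_1|\bigr|^2$ correctly supply the only details the paper leaves implicit.
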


\section{Structured distance to symmetric positive semidefiniteness}\label{sec3}
It follows from \eqref{deltaF} that the squared distance of a real $(2k+1)$-banded 
(possibly nonsymmetric) Toeplitz matrix $T_{(k)}=(n;k;\sigma,\delta,\tau)$ to the set of
symmetric positive semidefinite matrices is 
\begin{equation} \label{dspd}
\delta_F^+(T_{(k)})^2=\sum_{\lambda_i^{(k)}<0}(\lambda_{i}^{(k)})^2+\sum_{i=1}^k 
\frac{n-i}{2}(\sigma_i-\tau_i)^2,
\end{equation}
where the $\lambda_i^{(k)}$ denote the eigenvalues of the closest symmetric matrix 
(in the Frobenius norm) to $T_{(k)}$. We note that the closest
symmetric matrix to $T_{(k)}$ in the Frobenius norm is 
$T_{1,(k)}^{*}=(n;k;\frac{\sigma+\tau}{2},\delta,\frac{\sigma+\tau}{2})$, but the 
latter matrix is not guaranteed to be positive definite. Moreover, 
$T_{2,(k)}^{*}-\delta I_n= (n;k;\frac{\sigma-\tau}{2},0,\frac{\tau-\sigma}{2})$ is the 
closest skew-symmetric matrix to $T_{(k)}$ (in the Frobenius norm) and
$$
\|T_{2,(k)}^{*}-\delta I_n\|_F^2=\sum_{i=1}^k \frac{n-i}{2}(\sigma_i-\tau_i)^2.
$$

We are interested in determining the closest symmetric positive semidefinite 
$(2k+1)$-banded Toeplitz matrix $T_{(k)}^+$ to $T_{(k)}$, as well as the distance 
\[
\Delta_F^+(T_{(k)}):=\|T_{(k)}-T_{(k)}^+\|_F.
\]
There is no simple expression available for $T_{(k)}^+$ even when $T_{(k)}$ is symmetric. 
We therefore seek to determine an approximation $\widetilde{T}_{(k)}^{+}$ of $T_{(k)}^+$, 
as well as the upper bound 
\[
\widetilde{\Delta}_F^{+}({T}_{(k)}):=\| T_{(k)}-\widetilde{T}_{(k)}^{+}\|_F
\]
for $\Delta_F^{+}({T}_{(k)})$. A natural approach to determine a suitable approximation 
$\widetilde{T}_{(k)}^{+}$ of $T_{(k)}^{+}$ is to shift the matrix $T_{1,(k)}^{*}$ so that
all eigenvalues of the shifted matrix are nonnegative and one of them is zero. The 
following well-known result (see, e.g., \cite{GS,BG}) provides an approach to choose the 
shift, which we denote by $\gamma$.

\begin{proposition}\label{Sz}
The set $\{\lambda_h^{(n)}\}_{h=1}^{n}$ of eigenvalues of the symmetric banded Toeplitz 
matrix $T_{(k)}=(n;k;\sigma,\delta,\sigma)$, ordered so that
$\lambda_1^{(n)}\geq\lambda_2^{(n)}\geq\ldots\geq\lambda_n^{(n)}$, 
are distributed as $\{g(\frac{h\pi}{n+2})\}_{h=1}^{n}$, where $g$ is the symbol for the
matrix $T_{(k)}$, i.e.,
\[
g(\theta)=\delta+2\sum_{j=1}^k \sigma_j \cos (j\theta), \quad \theta \in (-\pi,\pi).
\]
Moreover, if $g(\theta)\geq 0$, $\forall\theta\in(-\pi,\pi)$, then $T_{(k)}$ is positive 
semidefinite or positive definite.
\end{proposition}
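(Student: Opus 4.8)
The plan is to separate the two assertions and prove them with different tools, since the positive-semidefiniteness claim is the one needed immediately for the subsequent shifting construction, while the distribution claim is the classical asymptotic result. The key object throughout is an integral representation of the quadratic form of $T_{(k)}=(n;k;\sigma,\delta,\sigma)$ in terms of the symbol $g$. Because $T_{(k)}$ is real symmetric and banded, its entries are exactly the Fourier coefficients of $g$: with $\hat g(m)=\frac{1}{2\pi}\int_{-\pi}^{\pi}g(\theta)e^{-im\theta}\,d\theta$ one finds $\hat g(0)=\delta$, $\hat g(\pm j)=\sigma_j$ for $1\le j\le k$, and $\hat g(m)=0$ for $|m|>k$, so that the $(r,s)$ entry of $T_{(k)}$ equals $\hat g(s-r)$.

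First I would attach to each $\mathbf{x}=(x_1,\dots,x_n)^T$ the trigonometric polynomial $p(\theta)=\sum_{j=1}^n x_j e^{ij\theta}$, expand $|p(\theta)|^2=\sum_{r,s}x_r\overline{x_s}\,e^{i(r-s)\theta}$, integrate against $g$, and use the Fourier-coefficient identification to obtain
\[
\mathbf{x}^{*}T_{(k)}\mathbf{x}=\frac{1}{2\pi}\int_{-\pi}^{\pi}g(\theta)\,|p(\theta)|^2\,d\theta .
\]
If $g(\theta)\ge 0$ for all $\theta\in(-\pi,\pi)$ the integrand is nonnegative, hence $\mathbf{x}^{*}T_{(k)}\mathbf{x}\ge 0$ for every $\mathbf{x}$ and $T_{(k)}$ is positive semidefinite; moreover, if $g$ does not vanish identically it has only finitely many zeros, so a vanishing quadratic form forces $p\equiv 0$ and hence $\mathbf{x}=\mathbf{0}$, giving positive definiteness. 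Combining the same identity with Parseval's relation $\frac{1}{2\pi}\int_{-\pi}^{\pi}|p(\theta)|^2\,d\theta=\sum_{j=1}^n|x_j|^2$ also yields the Rayleigh-quotient bounds $\min_\theta g(\theta)\le \mathbf{x}^{*}T_{(k)}\mathbf{x}/\sum_j|x_j|^2\le\max_\theta g(\theta)$, so every eigenvalue of $T_{(k)}$ lies in the closed range of $g$.

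For the distribution statement, that the ordered eigenvalues $\lambda_1^{(n)}\ge\dots\ge\lambda_n^{(n)}$ behave like the sampled values $\{g(h\pi/(n+2))\}_{h=1}^n$, I would invoke the Szeg\H{o} distribution theory for Hermitian banded Toeplitz sequences as developed in \cite{GS,BG}: for every continuous $F$ one has $\frac{1}{n}\sum_{h=1}^n F(\lambda_h^{(n)})\to\frac{1}{2\pi}\int_{-\pi}^{\pi}F(g(\theta))\,d\theta$ as $n\to\infty$. Since $g$ is even, the measure it induces on $(-\pi,\pi)$ is reproduced by any family of abscissae that equidistributes in $(0,\pi)$, and $\theta_h=h\pi/(n+2)$ is one such family; the eigenvalue inclusion in the range of $g$ proved above makes the sampling approximation consistent with the exact spectrum for each finite $n$.

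The main obstacle is the distribution claim, not the semidefiniteness claim. The integral representation disposes of positivity in a single line and holds verbatim for every finite $n$, whereas pinning down the precise sampling abscissae $h\pi/(n+2)$ (which are only asymptotically equivalent to the classical $h\pi/(n+1)$ arising for tridiagonal matrices) is genuinely an asymptotic equidistribution statement, best handled by citing the Szeg\H{o} machinery in \cite{GS,BG} rather than reproving it; the finite-$n$ localization bounds serve only to secure the approximation at the level of spectral inclusion.
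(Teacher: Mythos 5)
Your proposal is correct, but it does genuinely more than the paper, which offers no proof at all for this proposition: it is stated as a well-known result with a pointer to \cite{GS,BG}, and the positive-definiteness refinement under isolated zeros of $g$ is relegated to a remark citing \cite{S2}. Your quadratic-form identity $\mathbf{x}^{*}T_{(k)}\mathbf{x}=\frac{1}{2\pi}\int_{-\pi}^{\pi}g(\theta)|p(\theta)|^{2}\,d\theta$ is the standard and correct way to make the semidefiniteness claim self-contained: it is an exact finite-$n$ statement, it immediately gives positive semidefiniteness when $g\geq 0$, and your observation that a nonzero trigonometric polynomial $g$ has finitely many zeros, so that a vanishing quadratic form forces $p\equiv 0$, recovers the paper's Remark (the content of \cite{S2}) in the same stroke; the Parseval-based Rayleigh bounds localizing the spectrum in the range of $g$ are a further bonus the paper never states. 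For the distribution claim you do essentially what the paper does, namely defer to the Szeg\H{o} asymptotics in \cite{GS,BG}; your candid note that the abscissae $h\pi/(n+2)$ are only one equidistributed family among many (asymptotically interchangeable with $h\pi/(n+1)$) is accurate, though be aware that matching the \emph{ordered} eigenvalues to ordered samples strictly requires passing to the monotone rearrangement (quantile function) of $g$ rather than raw equidistribution alone --- a gloss you share with the paper's own loose phrasing, and harmless here since nothing downstream uses more than the semidefiniteness criterion, which your argument establishes rigorously.
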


\begin{rmk} 
Notice that the matrix $T_{(k)}=(n;k;\sigma,\delta,\sigma)$ is positive definite if its 
symbol $g(\theta)$ has only isolated zeros; see, e.g., \cite{S2}.
\end{rmk}

Let $T_{(k)}$ be a general banded Toeplitz matrix. One obtains a symmetric positive 
semidefinite $(2k+1)$-banded Toeplitz matrix by shifting the symmetric matrix 
$T_{1,(k)}^{*}$ by $\gamma I_n$, where 
\begin{equation}\label{gamma}
\gamma= \max \{ 0, \sum_{j=1}^k |\sigma_j + \tau_j |- \delta  \}.
\end{equation}
We remark that an application of Gershgorin disks suggests the same shift. 

The following result can be used to bound the distance between the spectra of 
$T_{1,(k)}^{*}$ and $\widetilde{T}^{+}_{(k)}:=T_{1,(k)}^{*}+\gamma I_n$, with $\gamma$ 
defined by \eqref{gamma}. 

\begin{proposition}(\cite{RB86})\label{dist_spec}
Let the matrices $A\in{\mathbb R}^{n\times n}$ and $B\in{\mathbb R}^{n\times n}$ be 
symmetric and measure the distance between the matrices $A$ and $B$ in the Frobenius 
norm, 
\[
d_{A,B}:=\|A-B\|_F.
\]
Let the eigenvalues of the matrices $A$ and $B$ be ordered to be nonincreasing as a
function of their index. Introduce the vectors 
\[
\lambda(A)=[\lambda_1(A),\lambda_2(A),\ldots,\lambda_n(A)]^T, \quad
\lambda(B)=[\lambda_1(B),\lambda_2(B),\ldots,\lambda_n(B)]^T,
\]
containing all eigenvalues $\lambda_j(A)$ of $A$ and $\lambda_j(B)$ of $B$, respectively,
and define the distance between these vectors by means of the Euclidean norm $\|\cdot\|$,
$$
d_{\lambda(A,B)}:=\|\lambda(A)-\lambda(B)\|.
$$
Then 
\begin{equation}\label{evbd}
d_{A,B} \geq d_{\lambda(A,B)}.
\end{equation}
\end{proposition}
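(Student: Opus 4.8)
The plan is to prove the equivalent squared inequality $\|A-B\|_F^2 \ge \sum_{i=1}^n (\lambda_i(A)-\lambda_i(B))^2$, reducing the matrix distance to a linear optimization over doubly stochastic matrices. First I would write spectral factorizations $A = U\Lambda U^T$ and $B = V M V^T$ with $U,V$ orthogonal and $\Lambda = \mathrm{diag}(\lambda_1(A),\dots,\lambda_n(A))$, $M = \mathrm{diag}(\lambda_1(B),\dots,\lambda_n(B))$ carrying the eigenvalues in the prescribed nonincreasing order. Since the Frobenius norm is invariant under left and right multiplication by orthogonal matrices, conjugating by $U$ yields
\[
\|A-B\|_F^2 = \|\Lambda - Q M Q^T\|_F^2, \qquad Q := U^T V,
\]
with $Q$ orthogonal, and the difference $\Lambda - QMQ^T$ is symmetric.

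Next I would expand this squared norm as $\mathrm{tr}((\Lambda - QMQ^T)^2)$. Using $Q^TQ=I$ one gets $\mathrm{tr}(QMQ^TQMQ^T) = \mathrm{tr}(M^2)$, and writing $Q=[q_{ij}]$ the cross term becomes $\mathrm{tr}(\Lambda Q M Q^T) = \sum_{i,j} \lambda_i(A)\lambda_j(B)\, q_{ij}^2$, so that
\[
\|A-B\|_F^2 = \sum_{i=1}^n \lambda_i(A)^2 + \sum_{i=1}^n \lambda_i(B)^2 - 2 \sum_{i,j=1}^n \lambda_i(A)\lambda_j(B)\, s_{ij},
\]
where $S := [s_{ij}] = [q_{ij}^2]$. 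The key structural observation is that orthogonality of $Q$ forces $\sum_j q_{ij}^2 = \sum_i q_{ij}^2 = 1$, so $S$ is doubly stochastic. Comparing with the target $\sum_i (\lambda_i(A)-\lambda_i(B))^2$, the desired inequality reduces to the single scalar estimate
\[
\sum_{i,j=1}^n \lambda_i(A)\lambda_j(B)\, s_{ij} \le \sum_{i=1}^n \lambda_i(A)\lambda_i(B),
\]
required to hold for every doubly stochastic $S$.

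This last display is where the real work lies, and it is the step I expect to be the main obstacle. The left-hand side is a linear functional of $S$ over the Birkhoff polytope of doubly stochastic matrices, whose extreme points are exactly the permutation matrices; hence its maximum is attained at some permutation matrix $P_\pi$ and equals $\sum_i \lambda_i(A)\lambda_{\pi(i)}(B)$. It then remains to invoke the rearrangement inequality: since both eigenvalue lists are sorted in the same nonincreasing order, the quantity $\sum_i \lambda_i(A)\lambda_{\pi(i)}(B)$ is largest for the identity permutation, which delivers precisely the right-hand side. This is exactly the point at which the hypothesis that the eigenvalues of $A$ and $B$ be ordered consistently becomes essential; without it the optimal matching could differ and the clean bound \eqref{evbd} would fail. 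Combining the three steps gives $d_{A,B}^2 \ge d_{\lambda(A,B)}^2$, and hence \eqref{evbd}.
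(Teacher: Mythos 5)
Your proof is correct and complete, but it cannot coincide with ``the paper's own proof'' for a simple reason: the paper does not prove Proposition \ref{dist_spec} at all --- it is quoted as a known result with the citation \cite{RB86}, and no argument is given in the text. What you have reconstructed is the classical Hoffman--Wielandt-type argument: orthogonal invariance of the Frobenius norm to reduce to $\|\Lambda-QMQ^T\|_F$, expansion of the trace so that the cross term $\sum_{i,j}\lambda_i(A)\lambda_j(B)\,q_{ij}^2$ is governed by the doubly stochastic matrix $S=[q_{ij}^2]$, maximization of the resulting linear functional over the Birkhoff polytope at a permutation matrix, and the rearrangement inequality (valid for arbitrary real sequences, so no positivity assumption is needed) to identify the identity permutation as optimal --- which is exactly where the common nonincreasing ordering of both spectra enters, as you correctly flag. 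All steps check out, including the subtler ones: the middle trace term collapses to $\mathrm{tr}(M^2)$ by $Q^TQ=I$, and the row and column sums of $S$ equal one because the rows and columns of an orthogonal matrix are unit vectors. By contrast, the source the paper cites, Bhatia's note \cite{RB86}, establishes a stronger Lidskii--Mirsky-type statement for Hermitian matrices --- the sorted eigenvalue vectors differ by at most $\|A-B\|$ in \emph{every} unitarily invariant norm --- via a majorization argument, of which \eqref{evbd} is the Frobenius-norm special case. The trade-off is the expected one: your route is elementary and self-contained (Birkhoff's theorem plus rearrangement, nothing from perturbation theory), and it even extends to the Hoffman--Wielandt theorem for normal matrices with an optimal matching; the cited approach buys generality over all unitarily invariant norms, which the present paper does not actually need.
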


In our context, we obtain equality in \eqref{evbd}, 
\[
d_{T_{1,(k)}^{*},\widetilde{T}^{+}_{(k)}}=\|T_{1,(k)}^{*}-\widetilde{T}^{+}_{(k)}\|_F=
\sqrt{n}\gamma=\|\lambda(T_{1,(k)}^{*})-\lambda(\widetilde{T}^{+}_{(k)})\|=
d_{\lambda(T_{1,(k)}^{*},\widetilde{T}^{+}_{(k)})}\,.
\]
We define for future reference 
\[
\widetilde{\Delta}_F^{+}(T_{1,(k)}^{*})=\|T_{1,(k)}^{*}-\widetilde{T}^{+}_{(k)}\|_F.
\]

\begin{rmk}\label{0n}
Notice that the matrix $\delta I_n$ may be considered a $(2k+1)$-banded Toeplitz matrix 
for any $k$ less than or equal to the integer part of $n/2$. It is symmetric positive 
semidefinite if and only if $\delta\geq 0$.
\end{rmk}

Using \eqref{deltaF}, the (unstructured) nearness to symmetric positive semidefiniteness 
of $T_{1,(k)}^{*}$ can be bounded by 
\begin{eqnarray*}
\delta_F^+(T_{1,(k)}^{*})^2&=&
\sum_{\lambda_i(T_{1,(k)}^{*})<0}\lambda_{i}(T_{1,(k)}^{*})^{2}\\
&\leq&\sum_{i=1}^n \lambda_{i}(T_{1,(k)}^{*})^{2}=
\|T_{1,(k)}^{*}\|_F^2=\sum_{i=1}^k\frac{n-i}{2}(\sigma_i+\tau_i)^2 + n\delta^2,
\end{eqnarray*}
where equality is attained when the spectrum of $T_{1,(k)}^{*}$ is confined to the 
negative real axis. In this case, the closest symmetric positive semidefinite matrix to 
$T_{1,(k)}^{*}$ is the zero matrix $O_n$, because 
\[
\max\{0,\delta\}=\max\left\{0,\sum_{i=1}^n\lambda_i(T_{1,(k)}^{*})\right\}=0.
\]
Thus, for $\Delta_F^+(T_{1,(k)}^{*})$, i.e., for the banded Toeplitz structured nearness 
to symmetric positive semidefiniteness of $T_{1,(k)}^{*}$, one obtains the lower and 
upper bounds
$$
\delta_F^+(T_{1,(k)}^{*})\leq \Delta_F^+(T_{1,(k)}^{*})\leq
\min \{\|T_{1,(k)}^{*}-\max\{0,\delta\}I_n\|_F,\widetilde{\Delta}_F^{+}(T_{1,(k)}^{*})\}.
$$

We conclude with the observation that, since $\|B+C\|_F^2=\|B\|_F^2+\|C\|_F^2$ if $B=B^T$ 
and $C=-C^T$, the above inequalities also hold for the banded Toeplitz structured distance
of $T_{(k)}$ to the set of symmetric positive semidefinite matrices. We have 
\begin{equation}\label{Delta_bounds}
\begin{array}{rcl}
\delta_F^+(T_{(k)})^2&\leq& \Delta_F^+(T_{(k)})^2\\
 &\leq& \min\{\|T_{1,(k)}^{*}-\max\{0,\delta\}I_n\|_F, 
\widetilde{\Delta}_F^{+}(T_{1,(k)}^{*})\}^2+\|T_{2,(k)}^{*}-\delta I_n\|^2,
\end{array}
\end{equation}
where equality is achieved if the spectrum of $T_{1,(k)}^{*}$ is confined to the negative
real axis, so that $\delta_F^+(T_{(k)})=\Delta_F^+(T_{(k)})=\|T_{(k)}\|_F$.

\begin{theorem}\label{upperb}
We have the following upper bounds for the squared $(2k+1)$-banded Toeplitz structured 
distance to symmetric positive semidefiniteness of $T_{(k)}=(n;k;\sigma,\delta,\tau)$:
$$
\min\left\{\sum_{i=1}^k (n-i)(\sigma_i^2+\tau_i^2),n\max\left\{0,\sum_{i=1}^k |\sigma_i+ 
\tau_i|-\delta\right\}^2+\sum_{i=1}^k \frac{n-i}{2}(\sigma_i-\tau_i)^2\right\}
$$
if $\delta>0$, and 
$$
\min\left\{\sum_{i=1}^k (n-i)(\sigma_i^2+\tau_i^2)+n\delta^2,n\left(\sum_{i=1}^k|\sigma_i+ 
\tau_i|-\delta\right)^2+\sum_{i=1}^k \frac{n-i}{2}(\sigma_i-\tau_i)^2\right\}
$$
if $\delta\leq0$.
These bounds can be computed in ${\cal O}(k)$ arithmetic floating point operations 
(flops).
\end{theorem}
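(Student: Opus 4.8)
The plan is to assemble the two stated bounds from the machinery already developed in the section, in particular the compound inequality \eqref{Delta_bounds}. The key structural observation is that $\Delta_F^+(T_{(k)})^2$ is bounded above by the right-hand side of \eqref{Delta_bounds}, namely by
\[
\min\left\{\|T_{1,(k)}^{*}-\max\{0,\delta\}I_n\|_F,\,\widetilde{\Delta}_F^{+}(T_{1,(k)}^{*})\right\}^2+\|T_{2,(k)}^{*}-\delta I_n\|_F^2,
\]
so the entire task reduces to evaluating each of the three Frobenius norms appearing here in closed form and then splitting into the cases $\delta>0$ and $\delta\leq0$ to resolve the $\max$.

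First I would record the two pieces that are case-independent. The skew-symmetric contribution is already computed in the text: $\|T_{2,(k)}^{*}-\delta I_n\|_F^2=\sum_{i=1}^k \frac{n-i}{2}(\sigma_i-\tau_i)^2$, which is the last summand in both displayed bounds. For the shift term I would use $\widetilde{T}^{+}_{(k)}=T_{1,(k)}^{*}+\gamma I_n$ with $\gamma$ as in \eqref{gamma}, together with the equality $\|T_{1,(k)}^{*}-\widetilde{T}^{+}_{(k)}\|_F=\sqrt{n}\,\gamma$ established just after Proposition \ref{dist_spec}; hence $\widetilde{\Delta}_F^{+}(T_{1,(k)}^{*})^2=n\gamma^2=n\max\{0,\sum_{i=1}^k|\sigma_i+\tau_i|-\delta\}^2$. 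Adding the skew part gives the second argument of the outer $\min$ in both bounds, and I would note it is identical in the two cases (the $\max$ already absorbs the sign of $\delta$).

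Next I would compute $\|T_{1,(k)}^{*}-\max\{0,\delta\}I_n\|_F^2$, which is the only genuinely case-dependent quantity and the source of the split. Since $T_{1,(k)}^{*}=(n;k;\tfrac{\sigma+\tau}{2},\delta,\tfrac{\sigma+\tau}{2})$ is symmetric banded Toeplitz, its squared Frobenius norm is obtained by counting entries on each diagonal: the main diagonal contributes $n(\delta-\max\{0,\delta\})^2$ after subtracting the shift, while the $i$-th super- and sub-diagonals each contribute $(n-i)\big(\tfrac{\sigma_i+\tau_i}{2}\big)^2$, giving $\sum_{i=1}^k (n-i)\tfrac{(\sigma_i+\tau_i)^2}{2}\cdot 2=\sum_{i=1}^k(n-i)\frac{(\sigma_i+\tau_i)^2}{1}\cdot\tfrac12$. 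When $\delta>0$ the diagonal term vanishes and this equals $\sum_{i=1}^k\frac{n-i}{2}(\sigma_i+\tau_i)^2$; when $\delta\leq0$ the shift is $0$ and an extra $n\delta^2$ survives. Combining with the skew term $\sum_{i=1}^k \frac{n-i}{2}(\sigma_i-\tau_i)^2$ and using the algebraic identity $\tfrac12(\sigma_i+\tau_i)^2+\tfrac12(\sigma_i-\tau_i)^2=\sigma_i^2+\tau_i^2$, the first argument of the outer $\min$ collapses to $\sum_{i=1}^k(n-i)(\sigma_i^2+\tau_i^2)$ when $\delta>0$ and to $\sum_{i=1}^k(n-i)(\sigma_i^2+\tau_i^2)+n\delta^2$ when $\delta\leq0$, exactly the first entries of the two displayed bounds.

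The routine part is the diagonal-counting and the $\tfrac12(\sigma_i+\tau_i)^2+\tfrac12(\sigma_i-\tau_i)^2=\sigma_i^2+\tau_i^2$ simplification; the only point requiring care is the bookkeeping of the $\max\{0,\delta\}I_n$ shift, since it is what produces the two separate cases, and one must verify that the second $\min$-argument is genuinely written as $\max\{0,\cdot\}^2$ in the $\delta>0$ case but may be dropped to $(\sum_i|\sigma_i+\tau_i|-\delta)^2$ in the $\delta\leq0$ case only if the expression inside is known nonnegative. The flop count is immediate: each surviving sum ranges over $i=1,\dots,k$ with ${\cal O}(1)$ work per term, so the whole bound costs ${\cal O}(k)$ flops.
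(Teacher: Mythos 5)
Your proof is correct and takes essentially the same route as the paper's: both start from the upper bound in \eqref{Delta_bounds}, replace $\widetilde{\Delta}_F^{+}(T_{1,(k)}^{*})$ by $\sqrt{n}\,\gamma$ with $\gamma$ from \eqref{gamma}, evaluate $\|T_{1,(k)}^{*}-\max\{0,\delta\}I_n\|_F^2$ by cases on the sign of $\delta$, and invoke the identity $\sum_{i=1}^k\frac{n-i}{2}(\sigma_i+\tau_i)^2+\sum_{i=1}^k\frac{n-i}{2}(\sigma_i-\tau_i)^2=\sum_{i=1}^k(n-i)(\sigma_i^2+\tau_i^2)$. The only blemish is a garbled intermediate expression in your diagonal count (each pair of off-diagonals contributes $2(n-i)\bigl(\tfrac{\sigma_i+\tau_i}{2}\bigr)^2=\tfrac{n-i}{2}(\sigma_i+\tau_i)^2$), but the value you actually carry forward is correct, and your remark that $\sum_{i=1}^k|\sigma_i+\tau_i|-\delta\geq 0$ when $\delta\leq 0$ is exactly what licenses dropping the $\max$ in that case.
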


\begin{proof}
The bounds follow from the upper bound in \eqref{Delta_bounds}, by replacing 
$\widetilde{\Delta}_F^{+}(T_{1,(k)}^{*})$ by $\sqrt{n}\gamma$, with $\gamma$ given by
\eqref{gamma}, and by observing that 
$$
\sum_{i=1}^k \frac{n-i}{2}(\sigma_i+\tau_i)^2+\sum_{i=1}^k\frac{n-i}{2}(\sigma_i-\tau_i)^2
= \sum_{i=1}^k (n-i)(\sigma_i^2+\tau_i^2).
$$ 
Moreover, it is 
straightforward to observe that the cost of computing these bounds increases linearly with
the bandwidth of the $(2k+1)$-banded Toeplitz matrix $T_{(k)}$. This concludes the proof.
\end{proof}

\begin{exmp}\label{ex1}
Consider the symmetric pentadiagonal Toeplitz matrices 
$T_{(2)}(p)=(100;2;\sigma,\delta,\tau)$, with entries $\sigma_1=\tau_1=0.05$, 
$\sigma_2=\tau_2=p$, and $\delta=0.1$, where $p$ ranges from $0.04$ to $0.06$ with step 
$0.0001$. Figure \ref{fig_ex} shows for each $p$ the squared distances 
$d_1=\sum_{i=1}^2\frac{n-i}{2}(\sigma_i+\tau_i)^2$ and 
$d_2=n\max \{0,\sum_{i=1}^2 |\sigma_i+\tau_i|-\delta\}^2$ in red and green, 
respectively. The squared distance $d_3=\sum_{i=1}^2 \frac{n-i}{2}(\sigma_i-\tau_i)^2$ 
always vanishes, since the matrices considered are symmetric.

It is easy to verify that the upper bounds in \eqref{Delta_bounds} for  
$\Delta_F^+(T_{(2)}(p))^2$ are attained by $d_2=\min\{d_1, d_2\}+d_3$ for $p$ ranging from
$0.04$ to $0.0492$ and by $d_1=\min\{d_1, d_2\}+d_3$ for $p$ ranging from $0.0493$ to 
$0.06$.
\end{exmp}

\begin{figure}
\centerline{
\includegraphics[scale=0.5,trim= 0mm 0.01mm 0mm 0mm]{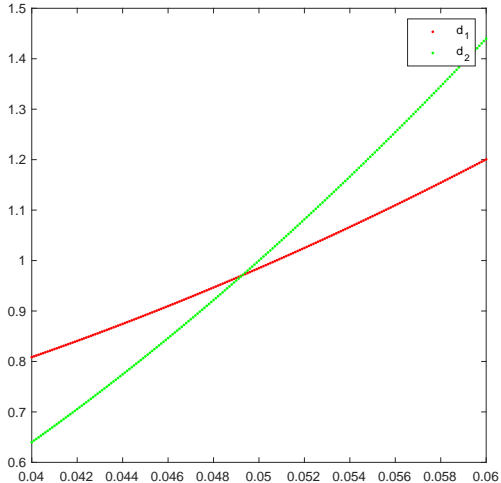}
}
\caption{Example \ref{ex1}. Upper bound for the $5$-banded Toeplitz structured nearness to 
symmetric positive semidefiniteness of $T_{(2)}(p)=(100;2;[0.05,p],0.1,[0.05,p])$, with 
$p=0.04:0.0001:0.06$.}\label{fig_ex}
\end{figure}

\section{Structured distance to symmetric positive semidefiniteness in the tridiagonal 
case}\label{sec4}
This section considers real tridiagonal Toeplitz matrices. The eigenvalues 
of such matrices are known to be
\[
\lambda_i(T_{(1)})=\delta + 2\sqrt{\sigma_1\tau_1}\,\cos \frac{i\pi }{n+1}, 
\;\;\;\;i=1,2,\dots,n.
\]
It is therefore possible to construct the symmetric tridiagonal 
semidefinite Toeplitz matrix $\widetilde{T}_{(1)}^{+}$ by adding the matrix $\gamma_n I_n$,
where $\gamma_n\geq0$ depends on the order $n$, to the symmetric part of $T_{(1)}$, i.e.,
to $T_{1,(1)}^*$, to obtain $\widetilde{T}_{(1)}^{+}$.

\subsection{The case $\mathbf{\boldsymbol{\delta}=0}$}
This subsection considers real tridiagonal Toeplitz matrices with vanishing diagonal 
entries. We may assume that the matrix $T_{(1)}=(n;1;\sigma_1,0,\tau_1)$ is of odd 
order $n$. The eigenvalues of $T_{1,(1)}^{*}$ then are given by 
\[
\lambda_i^{(1)}=|\sigma_1+\tau_1 |\,\cos \frac{i\pi }{n+1},  \;\;i=1,2,\dots,n.
\]
Thus, they are real and allocated symmetrically with respect to the origin, one of them 
vanishes; see, e.g., \cite{NPR13}. If, instead, $T_{(1)}$ is of even order, then no 
eigenvalue of $T_{1,(1)}^{*}$ vanishes.

The closest symmetric positive semidefinite (not necessarily tridiagonal Toeplitz) matrix 
computed by the algorithm in \cite{Hig88} has the same $(n-1)/2$ [or $n/2$, if $n$ is 
even] positive eigenvalues as 
$T_{1,(1)}^{*}$, whereas the remaining eigenvalues vanish. A matrix with such a spectrum 
cannot be a tridiagonal Toeplitz matrix; see, e.g., \cite{NPR13}. 

\begin{proposition}\label{dist_spec1}
The distance of the symmetric part $T_{1,(1)}^{*}$ of $T_{(1)}$ to symmetric positive 
semidefiniteness is 
\[
\delta_F^+(T_{1,(1)}^{*})= \frac{\sqrt{n-1}}{2}|\sigma_1+\tau_1 |.
\]
\end{proposition}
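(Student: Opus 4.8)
The plan is to exploit the explicit spectrum of $T_{1,(1)}^{*}$ together with formula \eqref{deltaF}. First I would observe that $T_{1,(1)}^{*}$ is by construction symmetric, so its skew-symmetric part $C$ vanishes and \eqref{deltaF} collapses to
\[
\delta_F^+(T_{1,(1)}^{*})=\sqrt{\sum_{\lambda_i^{(1)}<0}(\lambda_i^{(1)})^2}\,.
\]
The entire task therefore reduces to summing the squares of the negative eigenvalues of $T_{1,(1)}^{*}$.

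Next I would recall the eigenvalues $\lambda_i^{(1)}=|\sigma_1+\tau_1|\cos\frac{i\pi}{n+1}$, $i=1,\dots,n$, displayed just above the statement. The key structural fact is the reflection identity $\cos\frac{(n+1-i)\pi}{n+1}=-\cos\frac{i\pi}{n+1}$, which pairs the index $i$ with $n+1-i$ and exhibits the spectrum as symmetric about the origin. For odd $n$ the central index $i=(n+1)/2$ yields $\cos\frac{\pi}{2}=0$, i.e., exactly one vanishing eigenvalue, while the remaining $n-1$ eigenvalues split into $(n-1)/2$ positive and $(n-1)/2$ negative values that are pairwise negatives of one another.

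The decisive step is then to avoid summing cosines explicitly. Because the nonzero part of the spectrum is symmetric about $0$, the sum of squares of the negative eigenvalues equals exactly one half of the sum of squares of all eigenvalues, which by the spectral identity for symmetric matrices equals $\tfrac{1}{2}\|T_{1,(1)}^{*}\|_F^2$. I would compute this Frobenius norm directly from the entries: $T_{1,(1)}^{*}$ has zero diagonal and $2(n-1)$ off-diagonal entries each equal to $\frac{\sigma_1+\tau_1}{2}$, so
\[
\|T_{1,(1)}^{*}\|_F^2=2(n-1)\left(\frac{\sigma_1+\tau_1}{2}\right)^2=\frac{n-1}{2}(\sigma_1+\tau_1)^2\,.
\]
Combining the two facts gives $\delta_F^+(T_{1,(1)}^{*})^2=\frac{1}{2}\cdot\frac{n-1}{2}(\sigma_1+\tau_1)^2=\frac{n-1}{4}(\sigma_1+\tau_1)^2$, and taking square roots yields the claimed expression.

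The only point demanding care, rather than a genuine obstacle, is the exact balance between positive and negative eigenvalues: one must confirm that the single zero eigenvalue, present precisely because $n$ is odd, is what makes the halving exact, so that no off-by-one discrepancy arises. Should a self-contained verification be preferred, the trigonometric identity $\sum_{i=1}^{n}\cos^2\frac{i\pi}{n+1}=\frac{n-1}{2}$ reproduces the same constant; but the Frobenius-norm argument is both shorter and independent of any cosine summation.
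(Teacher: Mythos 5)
Your proof is correct and follows essentially the same route as the paper: both invoke the symmetric allocation of the eigenvalues of $T_{1,(1)}^{*}$ about the origin to conclude that the sum of squares of the negative eigenvalues equals $\tfrac{1}{2}\|T_{1,(1)}^{*}\|_F^2$, and then evaluate the Frobenius norm to get $\tfrac{n-1}{4}(\sigma_1+\tau_1)^2$. You merely make explicit details the paper leaves implicit (the reflection identity $\cos\frac{(n+1-i)\pi}{n+1}=-\cos\frac{i\pi}{n+1}$ and the entrywise norm computation); note only that the halving is exact for even $n$ as well, since the pairing then has no fixed point, so the zero eigenvalue in the odd case is not what makes it work.
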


\begin{proof}
According to \eqref{dspd}, one has 
\[
\delta_F^+(T_{1,(1)}^{*})^2=\sum_{\lambda_i^{(1)}<0}(\lambda_{i}^{(1)})^2.
\]
The eigenvalues of $T_{1,(1)}^{*}$ are allocated symmetrically with respect to the 
origin. Therefore, 
\[
\delta_F^+(T_{1,(1)}^{*})^2 = \frac{1}{2} \|T_{1,(1)}^{*}\|_F^2 =
\frac{n-1}{4}(\sigma_1+\tau_1)^2.
\]
This concludes the proof.
\end{proof}

We are in a position to determine upper and lower bounds for $\Delta_F^+(T_{1,(1)}^{*})$.

\begin{corollary}\label{corx}
\begin{equation}\label{dist_spec2}
\frac{\sqrt{n-1}}{2}|\sigma_1+\tau_1 |\leq \Delta_F^+(T_{1,(1)}^{*})\leq 
\sqrt{\frac{{n-1}}{2}}|\sigma_1+\tau_1|
\end{equation}
for all $n=1,2,\ldots~$.
\end{corollary}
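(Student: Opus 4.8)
The plan is to establish the two inequalities separately, each as a short consequence of results already in hand. For the lower bound I would observe that the feasible set defining the structured distance $\Delta_F^+(T_{1,(1)}^{*})$—the symmetric positive semidefinite tridiagonal Toeplitz matrices—is contained in the feasible set defining the unstructured distance $\delta_F^+(T_{1,(1)}^{*})$, namely all symmetric positive semidefinite matrices. Minimizing the same Frobenius distance over a smaller set can only increase the optimal value, so $\Delta_F^+(T_{1,(1)}^{*})\geq\delta_F^+(T_{1,(1)}^{*})$. Proposition \ref{dist_spec1} then supplies $\delta_F^+(T_{1,(1)}^{*})=\frac{\sqrt{n-1}}{2}|\sigma_1+\tau_1|$, which is exactly the left-hand bound.

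For the upper bound I would exhibit an explicit admissible competitor and measure its distance to $T_{1,(1)}^{*}$. Since $\delta=0$, the natural choice is the zero matrix $O_n$: by Remark \ref{0n} it is a symmetric positive semidefinite tridiagonal Toeplitz matrix, hence lies in the feasible set for $\Delta_F^+$ (which also confirms that set is nonempty, so the minimum is well defined). Consequently $\Delta_F^+(T_{1,(1)}^{*})\leq\|T_{1,(1)}^{*}-O_n\|_F=\|T_{1,(1)}^{*}\|_F$. It remains to evaluate this Frobenius norm: the matrix $T_{1,(1)}^{*}=(n;1;\frac{\sigma_1+\tau_1}{2},0,\frac{\sigma_1+\tau_1}{2})$ has $2(n-1)$ nonzero entries, each equal to $\frac{\sigma_1+\tau_1}{2}$, so $\|T_{1,(1)}^{*}\|_F=\sqrt{2(n-1)}\,\frac{|\sigma_1+\tau_1|}{2}=\sqrt{\frac{n-1}{2}}\,|\sigma_1+\tau_1|$, the right-hand bound. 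Equivalently, this is the first term of the minimum in \eqref{Delta_bounds} specialized to $T_{1,(1)}^{*}$, where the skew-symmetric contribution vanishes and $\max\{0,\delta\}=0$.

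I do not anticipate a genuine obstacle here: both endpoints carry the common factor $|\sigma_1+\tau_1|$ and vanish when $n=1$, so the chain holds trivially in that degenerate case and, by the above, for every larger $n$. The only points requiring care are confirming that $O_n$ is admissible as a tridiagonal Toeplitz matrix of the prescribed bandwidth (guaranteed by Remark \ref{0n}) and the elementary bookkeeping of the $2(n-1)$ off-diagonal entries in the norm computation. The factor $\sqrt{2}$ gap between the two bounds reflects precisely that the unstructured optimum may zero out only the negative eigenvalues while retaining the positive half of the symmetric spectrum, whereas the structured competitor $O_n$ is forced to discard the entire matrix.
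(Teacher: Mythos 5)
Your proof is correct and follows essentially the same route as the paper: the lower bound is obtained exactly as in the paper from $\Delta_F^+\geq\delta_F^+$ together with Proposition \ref{dist_spec1}, and your upper bound via the competitor $O_n$ is precisely the first term of the minimum in Theorem \ref{upperb} (with $\delta=0$, so $\max\{0,\delta\}I_n=O_n$), which the paper invokes and then observes dominates the shifted-matrix bound $\sqrt{n}|\sigma_1+\tau_1|$. The only difference is cosmetic: you compute $\|T_{1,(1)}^{*}\|_F$ directly instead of citing Theorem \ref{upperb} and discarding the larger term of its minimum.
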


\begin{proof}
Theorem \ref{upperb} applied to $T_{1,(1)}^{*}=(n;1; \frac{\sigma_1+\tau_1}{2},0,
\frac{\sigma_1+\tau_1}{2})$ and Proposition \ref{dist_spec1} yield the lower and upper bounds
\[
\frac{\sqrt{n-1}}{2}|\sigma_1+\tau_1 |\leq \Delta_F^+(T_{1,(1)}^{*})\leq 
\min\left\{\sqrt{\frac{{n-1}}{2}} |\sigma_1+\tau_1 |, \sqrt{n}|\sigma_1+\tau_1 |\,
\right\},
\]
from which \eqref{dist_spec2} straightforwardly follows.
\end{proof}

Alternatively, one might consider shifting $T_{1,(1)}^{*}$ by a multiple of the 
identity so that all eigenvalues become nonnegative. Since the eigenvalues of 
$T_{1,(1)}^{*}$ are allocated symmetrically with respect to the origin, this means that we
could add a multiple of the identity, $\gamma_n I_n$, to $T_{1,(1)}^{*}$, with $\gamma_n$ 
equal to the spectral radius 
\[
\rho (T_{1,(1)}^{*})=|\sigma_1+\tau_1 |\,\cos \frac{\pi}{n+1}.
\] 
Notice that $\gamma_n=0$ if and only if $\sigma_1=-\tau_1$, i.e., $T_{1,(1)}^{*}=O_n$ and 
$T_{2,(1)}^{*}=T_{(1)}$. \\
This way, one would get
\[ 
\widetilde{\Delta}_F^{+}(T_{1,(1)}^{*})=\sqrt{n}|\sigma_1+\tau_1 |\,\cos \frac{\pi}{n+1}.
\]
However, it is easy to show that 
\begin{equation}\label{ineq}
\sqrt{\frac{{n-1}}{2}} |\sigma_1+\tau_1 |\leq \sqrt{n}|\sigma_1+\tau_1 |\,
\cos \frac{\pi }{n+1},
\end{equation}
for all $n=1,2,\ldots~$. Hence, the upper bound in \eqref{dist_spec2} is sharper. 
Indeed, direct computations show equality in \eqref{ineq} for $n\in\{1,2\}$ and, for 
$n\geq 3$, one has
\[
\frac{1}{2}\leq\cos^2\left(\frac{\pi}{n+1}\right).
\]

We next show lower and upper bounds for the $3$-banded Toeplitz structured distance of
$T_{(1)}=(n;1;\sigma_1,0,\tau_1)$ to symmetric positive semidefiniteness in the Frobenius 
norm. 

\begin{theorem} \label{upperb1}
For the squared $3$-banded Toeplitz structured distance to symmetric positive 
semidefiniteness of the matrix $T_{(1)}=(n;1;\sigma_1,0,\tau_1)$, we have the lower and
upper bounds 
\begin{equation}\label{bds}
\frac{n-1}{4}(3\,\sigma_1^2+3\,\tau_1^2-2\,\sigma_1\tau_1)\leq \Delta_F^+(T_{(1)})^2\leq
(n-1) (\sigma_1^2+\tau_1^2).
\end{equation}
\end{theorem}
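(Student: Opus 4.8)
The plan is to obtain the two bounds separately, each by specializing a result already in hand; both reduce to routine algebra once the right specialization is chosen, so no genuinely new idea is required.

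For the lower bound, I would start from the inequality $\delta_F^+(T_{(1)})^2 \leq \Delta_F^+(T_{(1)})^2$, which is the left-hand inequality of \eqref{Delta_bounds}. It then suffices to show that the unstructured squared distance $\delta_F^+(T_{(1)})^2$ equals the claimed quantity $\frac{n-1}{4}(3\,\sigma_1^2+3\,\tau_1^2-2\,\sigma_1\tau_1)$. Specializing \eqref{dspd} to $k=1$ and $\delta=0$ gives
\[
\delta_F^+(T_{(1)})^2 = \sum_{\lambda_i^{(1)}<0}(\lambda_i^{(1)})^2 + \frac{n-1}{2}(\sigma_1-\tau_1)^2,
\]
and Proposition \ref{dist_spec1} identifies the first sum as $\delta_F^+(T_{1,(1)}^{*})^2 = \frac{n-1}{4}(\sigma_1+\tau_1)^2$. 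Adding the two contributions and expanding the squares yields exactly $\frac{n-1}{4}(3\,\sigma_1^2+3\,\tau_1^2-2\,\sigma_1\tau_1)$, which establishes the lower bound.

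For the upper bound, I would invoke Theorem \ref{upperb} in the case $\delta=0$ (so the $\delta\leq 0$ branch applies) with $k=1$. There the squared distance is bounded above by
\[
\min\left\{(n-1)(\sigma_1^2+\tau_1^2),\; n(\sigma_1+\tau_1)^2 + \frac{n-1}{2}(\sigma_1-\tau_1)^2\right\},
\]
using $|\sigma_1+\tau_1|^2=(\sigma_1+\tau_1)^2$ and $\delta^2=0$. Since a minimum never exceeds either of its arguments, this quantity is at most its first argument $(n-1)(\sigma_1^2+\tau_1^2)$, which is the claimed upper bound.

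The main obstacle, such as it is, is purely organizational: recognizing that the lower bound coincides with the unstructured distance $\delta_F^+(T_{(1)})^2$, assembled by combining the skew-symmetric contribution $\frac{n-1}{2}(\sigma_1-\tau_1)^2$ with the negative-eigenvalue contribution $\frac{n-1}{4}(\sigma_1+\tau_1)^2$ of the symmetric part. Once this identification is made, both inequalities follow from the quoted results with only elementary manipulation.
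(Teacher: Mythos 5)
Your proposal is correct and follows essentially the same route as the paper: the lower bound via $\Delta_F^+(T_{(1)})^2\geq\delta_F^+(T_{(1)})^2$, splitting the unstructured distance into the negative-eigenvalue contribution $\frac{n-1}{4}(\sigma_1+\tau_1)^2$ of Proposition \ref{dist_spec1} plus the skew-symmetric term $\frac{n-1}{2}(\sigma_1-\tau_1)^2$, and the upper bound being the squared distance to the zero matrix. The only cosmetic difference is that the paper obtains the upper bound directly as $\Delta_F^+(T_{(1)})^2\leq\|T_{(1)}\|_F^2=(n-1)(\sigma_1^2+\tau_1^2)$, whereas you route it through the $\delta\leq 0$ branch of Theorem \ref{upperb}, whose first argument of the minimum is exactly this same quantity.
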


\begin{proof}
The upper bound follows from 
\[
\Delta_F^+(T_{(1)})^2\leq\|T_{(1)}\|_F^2=(n-1)(\sigma_1^2+\tau_1^2).
\]
The lower bound is obtained from
\[
\Delta_F^+(T_{(1)})^2\geq\delta_F^+(T_{(1)})^2=\delta_F^+
(T_{1,(1)}^*)^2+\|T_{2,(1)}^*\|_F^2,
\]
where the equality is a consequence of \eqref{deltaF}. The first term on the right-hand 
side is given by Proposition \ref{dist_spec1} and the second term is evaluated in a 
straightforward manner to give the lower bound \eqref{bds}.
%
%The proof follows from Corollary \ref{corx} by adding $\| T_{2,(1)}^{*}\|_F^2= \frac{n-1}{2}
%(\sigma_1-\tau_1)^2$ to both sides of \eqref{dist_spec2}. 
\end{proof}

\begin{exmp} \label{e1} 
Consider the downshift matrix
\begin{equation}\label{shiftmat}
T_{(1)}=(n;1;1,0,0)=\left[\begin{array}{cccccc}
0 & 0 & \cdots & \cdots & 0 & 0 \\
1 & 0 &0 & \cdots  & 0 & 0 \\
0 & 1 &0 & \cdots  & 0 & 0 \\
\vdots  & 0 & \ddots  & \vdots & \vdots & \vdots \\
 \vdots &  \vdots & \ddots  &   \ddots      & 0 & 0 \\
0 & \cdots  & \cdots  &  0  & 1 & 0 
\end{array}\right].\end{equation}
Its squared distances in the Frobenius norm to the sets of the symmetric and 
skew-symmetric (banded Toeplitz) matrices are both $\frac{n-1}{2}$. Thus, the squared 
banded Toeplitz structured distance to normality is $\frac{n-1}{2}$. It is shown in
\cite[Section 9]{NPR} that the squared (non-banded) Toeplitz structured distance to 
normality is $\frac{n-1}{n}$, and a circulant being at this distance is described.
Moreover, it is shown in \cite[Proposition 2.2]{GNNR} that the squared distance to
the set of the symmetric positive semidefinite matrices in the Frobenius norm is
$\delta_F^+(T_{(1)})^2=\frac{3(n-1)}{4}$. These results are consistent with Theorem 
\ref{upperb1}. 

Applying our approach to constructing an approximate nearest symmetric tridiagonal 
positive semidefinite matrix $\widetilde{T}^{+}_{(1)}$ to $T_{(1)}$, we obtain
\begin{equation}\label{posdefTo} 
\widetilde{T}^{+}_{(1)}=(n;1;\frac{1}{2},\cos \frac{\pi }{n+1},\frac{1}{2})=
\left[\begin{array}{ccccc}
\cos \frac{\pi }{n+1} & \frac{1}{2} & \cdots & \cdots  & 0 \\
\frac{1}{2} & \cos \frac{\pi }{n+1} &\frac{1}{2} & \cdots   & 0 \\
0 & \frac{1}{2} &\dots & \cdots   & 0 \\
\vdots  & 0 & \ddots  & \vdots & \vdots \\
 \vdots &  \vdots & \ddots  &   \ddots      &  \frac{1}{2}  \\
0 & \cdots  & \cdots  &  \frac{1}{2}  &  \cos \frac{\pi }{n+1} 
\end{array}\right].
\end{equation}
and 
\[
\widetilde{\Delta}_F^{+}(T_{(1)})^2=\|T_{(1)}-\widetilde{T}_{(1)}^{+}\|^2=
\|T_{1,(1)}^{*}-\widetilde{T}_{(1)}^{+}\|_F^2+\|T_{2,(1)}^*\|_F^2=
n\cos^2 \frac{\pi }{n+1}+\frac{n-1}{2}.
\]
Moreover, the squared distance between the spectrum of $T_{1,(1)}^{*}$ and the spectrum of
$\widetilde{T}_{(1)}^{+}$ is 
$\|\lambda^{(1)}-\widetilde{\lambda}^{+}\|^2=n\cos^2 \frac{\pi }{n+1}$, whereas the 
squared distance between the spectrum of $T_{1,(1)}^{*}$  and the spectrum of the 
(not necessarily tridiagonal Toeplitz) closest symmetric positive semidefinite matrix is 
$\|\lambda^{(1)}-\lambda^{+}\|^{2}=\frac{n-1}{4}$. Here $\lambda^{(1)}$ is the vector of
all eigenvalues of $T_{1,(1)}^{*}$ ordered nonincreasingly, $\widetilde{\lambda}^{+}$ 
denotes the vector of eigenvalues of $\widetilde{T}_{(1)}^{+}$ ordered in the same manner,
and $\lambda^{+}$ is a vector of all eigenvalues of the closest symmetric positive 
semidefinite matrix ordered similarly; $\|\cdot\|$ denotes the Euclidean norm.

Finally, regard the approximate nearest symmetric positive semidefinite tridiagonal 
Toeplitz matrix $O_n\in\R^{n\times n}$. The squared distance from $T_{(1)}$ to $O_n$ is 
$\| T_{(1)}\|_F^2=n-1$, whereas the squared distance  between the spectrum of 
$T_{1,(1)}^{*}$ and the spectrum of $O_n$ is  $\|\lambda^{(1)}\|^2 =\frac{n-1}{2}$. We 
obtain
\[
\frac{3(n-1)}{4}=\delta_F^+(T_{(1)})^2\leq\Delta_F^{+}(T_{(1)})^2\leq\| T_{(1)}\|_F^2=n-1.
\]
\end{exmp}
 
\subsection{The case $\mathbf{\boldsymbol{\delta}\neq0}$} 
We consider general tridiagonal Toeplitz matrices $T_{(1)}=(n;1;\sigma_1,\delta,\tau_1)$. 
The distance $\delta^+_F $ to symmetric positive semidefiniteness depends on the 
eigenvalues of $T_{1,(1)}^{*}$, which are given by 
\[
\lambda_i^{(1)}=\delta + |\sigma_1+\tau_1 |\,\cos \frac{i\pi }{n+1},\qquad i=1,2,\dots,n.
\]

When $\delta <|\sigma_1+\tau_1 |\,\cos \frac{\pi }{n+1}$, the closest symmetric positive 
semidefinite matrix computed as in \cite{Hig88} has the same nonnegative eigenvalues as  
${T}_{1,(1)}^{*}$ and the remaining eigenvalues are zero. If, instead,
$\delta \geq |\sigma_1+\tau_1 |\,\cos \frac{\pi }{n+1}$, then ${T}_{1,(1)}^{*}$ is
symmetric positive semidefinite and the following argument is not needed. 

Assume that the matrix ${T}_{1,(1)}^{*}$ is not symmetric positive semidefinite. Then we 
can add a multiple of the identity, $\gamma_n I_n$, to $T_{1,(1)}^{*}$ with 
\begin{equation}\label{gamma_n}
\gamma_n=|\delta -|\sigma_1+\tau_1 |\,\cos \frac{\pi }{n+1}|
\end{equation}\
so that the matrix $\widetilde{T}_{(1)}^{+}=T_{1,(1)}^{*}+\gamma_n I_n$ is positive 
semidefinite. We have 
\[
\widetilde{T}_{(1)}^{+}= (n;1;(\sigma_1+\tau_1)/2,|\sigma_1+\tau_1 |\,
\cos \frac{\pi }{n+1},(\sigma_1+\tau_1)/2).
\]
Alternatively, one may consider $\max\{0,\delta\}I_n$ as an approximate nearest symmetric 
positive semidefinite tridiagonal Toeplitz matrix.

\begin{theorem}\label{upperb1d} 
For the squared $3$-banded Toeplitz structured 
distance to symmetric positive semidefiniteness of the matrix $T_{(1)}=(n;1;\sigma,\delta,\tau)$,
we have the upper bounds
$$
\min\left\{(n-1)(\sigma_1^2+\tau_1^2), n\max\left\{0,|\sigma_1+\tau_1|\cos\frac{\pi}{n+1}- 
\delta\right\}^2+\frac{n-1}{2}(\sigma_1-\tau_1)^2\right\} 
$$
if $\delta>0$, and
$$
\min\left\{(n-1)(\sigma_1^2+\tau_1^2)+n\delta^2,n\left(|\sigma_1+\tau_1|
\cos\frac{\pi}{n+1}-\delta\right)^2+\frac{n-1}{2}(\sigma_1-\tau_1)^2\right\}
$$
if $\delta\leq0$. The cost of computing these upper bounds is ${\cal O}(1)$ flops.
\end{theorem}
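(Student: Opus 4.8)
The plan is to specialize the reasoning behind Theorem \ref{upperb} to the tridiagonal case $k=1$, while sharpening the shift by exploiting that the spectrum of the symmetric part $T_{1,(1)}^{*}$ is known exactly. I would start from the upper bound \eqref{Delta_bounds} with $k=1$, in which the symmetric part $T_{1,(1)}^{*}$ may be approximated either by $\max\{0,\delta\}I_n$ or by the shifted matrix $\widetilde{T}_{(1)}^{+}=T_{1,(1)}^{*}+\gamma_n I_n$, while the skew-symmetric contribution is always $\|T_{2,(1)}^{*}-\delta I_n\|^2=\frac{n-1}{2}(\sigma_1-\tau_1)^2$. Since the common skew term is added to both candidates before the minimum is taken, the bound has the form $\min\{a^2+c,\,b^2+c\}$ with $c=\frac{n-1}{2}(\sigma_1-\tau_1)^2$, and the two resulting entries will be the two terms inside each $\min$ in the statement.

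The essential difference from Theorem \ref{upperb} lies in the choice of shift. In the general banded case, $\gamma$ in \eqref{gamma} is the Gershgorin value, whereas here the eigenvalues $\lambda_i^{(1)}=\delta+|\sigma_1+\tau_1|\cos\frac{i\pi}{n+1}$ are available in closed form. Hence the smallest eigenvalue of $T_{1,(1)}^{*}$ is $\delta-|\sigma_1+\tau_1|\cos\frac{\pi}{n+1}$, and the optimal shift rendering $\widetilde{T}_{(1)}^{+}$ positive semidefinite is $\gamma_n=\max\{0,|\sigma_1+\tau_1|\cos\frac{\pi}{n+1}-\delta\}$, as in \eqref{gamma_n}; this is sharper than the Gershgorin choice because $\cos\frac{\pi}{n+1}\le 1$. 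Since $\widetilde{T}_{(1)}^{+}-T_{1,(1)}^{*}=\gamma_n I_n$, the squared distance for this candidate is exactly $b^2+c=n\gamma_n^2+\frac{n-1}{2}(\sigma_1-\tau_1)^2$.

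It then remains to evaluate the candidate $\max\{0,\delta\}I_n$ and split on the sign of $\delta$. For $\delta>0$ one has $\|T_{1,(1)}^{*}-\delta I_n\|_F^2=\frac{n-1}{2}(\sigma_1+\tau_1)^2$; adding $c$ and using $(\sigma_1+\tau_1)^2+(\sigma_1-\tau_1)^2=2(\sigma_1^2+\tau_1^2)$ collapses $a^2+c$ to $(n-1)(\sigma_1^2+\tau_1^2)$, the first entry of the $\delta>0$ minimum, while $\gamma_n$ retains its $\max$. For $\delta\le0$ the approximant is $O_n$, so $a^2=\|T_{1,(1)}^{*}\|_F^2=n\delta^2+\frac{n-1}{2}(\sigma_1+\tau_1)^2$ and $a^2+c=(n-1)(\sigma_1^2+\tau_1^2)+n\delta^2$; moreover the argument of $\gamma_n$ is then automatically nonnegative, so the $\max$ may be dropped, producing the second entry. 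Taking the minimum in each case yields the stated bounds, and since every term is a fixed closed-form expression in $\sigma_1,\tau_1,\delta,n$, the cost is ${\cal O}(1)$ flops.

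I expect the only point requiring care---rather than a genuine obstacle---to be the bookkeeping of the $\max$: one must check that for $\delta\le0$ the quantity $|\sigma_1+\tau_1|\cos\frac{\pi}{n+1}-\delta$ is nonnegative, justifying removal of the $\max$, whereas for $\delta>0$ it may be negative and must be clipped at zero. This reduces to $\cos\frac{\pi}{n+1}\ge0$ for $n\ge1$, which is immediate.
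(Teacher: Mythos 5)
Your proposal is correct and follows essentially the same route as the paper's proof, which simply invokes the upper bound \eqref{Delta_bounds} with the shift $\gamma_n$ of \eqref{gamma_n} in place of the Gershgorin shift $\gamma$ of \eqref{gamma}; your algebra (the skew term $\frac{n-1}{2}(\sigma_1-\tau_1)^2$, the identity $(\sigma_1+\tau_1)^2+(\sigma_1-\tau_1)^2=2(\sigma_1^2+\tau_1^2)$, and the sign analysis allowing the $\max$ to be dropped when $\delta\leq 0$) correctly fills in the details the paper leaves implicit or delegates to the proof of Theorem \ref{upperb}.
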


\begin{proof}
The bounds follow from \eqref{Delta_bounds}, where 
$\widetilde{\Delta}_F^{+}(T_{1,(k)}^{*})$ is given by the shift $\gamma_n$ in 
\eqref{gamma_n} (instead of the shift $\gamma$ in \eqref{gamma}, as in Theorem 
\ref{upperb}). 
\end{proof}

\begin{exmp} \label{e3}
Consider the symmetric tridiagonal Toeplitz matrices 
$T_{(1)}(p)=(15;1;\sigma_1,\delta,\tau_1)$ with $\sigma_1=\tau_1=0.05$ and $\delta=p$, 
where $p$ ranges from $0.02$ to $0.05$ with step $0.0001$. The left-hand side graphs of
Figure \ref{fig_ex3} show, for each $p$, the squared distances 
\[
d_1=\frac{n-1}{2}(\sigma_1+\tau_1)^2,\quad d_2=n\max\{0,|\sigma_1+\tau_1|-\delta\}^2
\]
that come from  Theorem \ref{upperb}, in red and green, respectively. Since the 
squared distance $d_3=\frac{n-1}{2} (\sigma_1-\tau_1)^2$ vanishes, both $d_1$ and 
$d_2$ provide upper bounds for the squared $3$-banded Toeplitz structured distance 
to symmetric positive semidefiniteness. The graphs on the right-hand side of Figure 
\ref{fig_ex3} show, for each $p$, the squared distances 
\[
d_1=\frac{n-1}{2}(\sigma_1+\tau_1)^2,\quad 
d_2=n\max\{0,|\sigma_1+\tau_1|\cos\frac{\pi }{n+1}-\delta\}^2 
\]
that come from Theorem \ref{upperb1d}, in red and green, respectively. These distances
provide upper bounds for the squared $3$-banded Toeplitz structured distance to 
symmetric positive semidefiniteness. It is easy to verify that they are sharper.
For instance, for $p=0.03$, one has $d_1=0.0700$ and, in the former case, $d_2= 0.0735$ 
(left graph), whereas in the latter case $d_2= 0.0695$ (right graph). 
\end{exmp}

\begin{figure}
\centerline{
\includegraphics[scale=0.4,trim= 0mm 0.01mm 0mm 0mm]{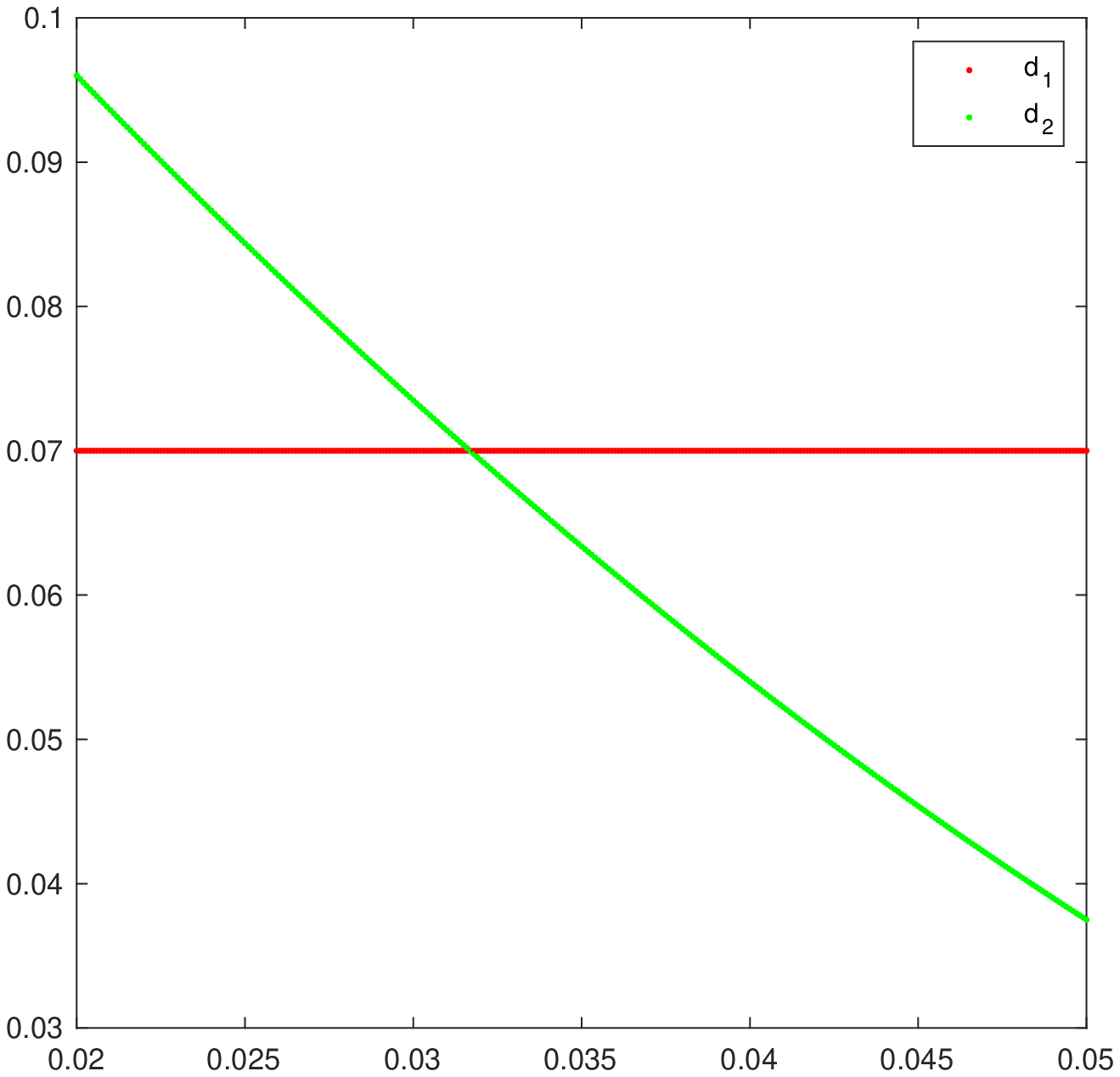}
\includegraphics[scale=0.4,trim= 0mm 0.01mm 0mm 0mm]{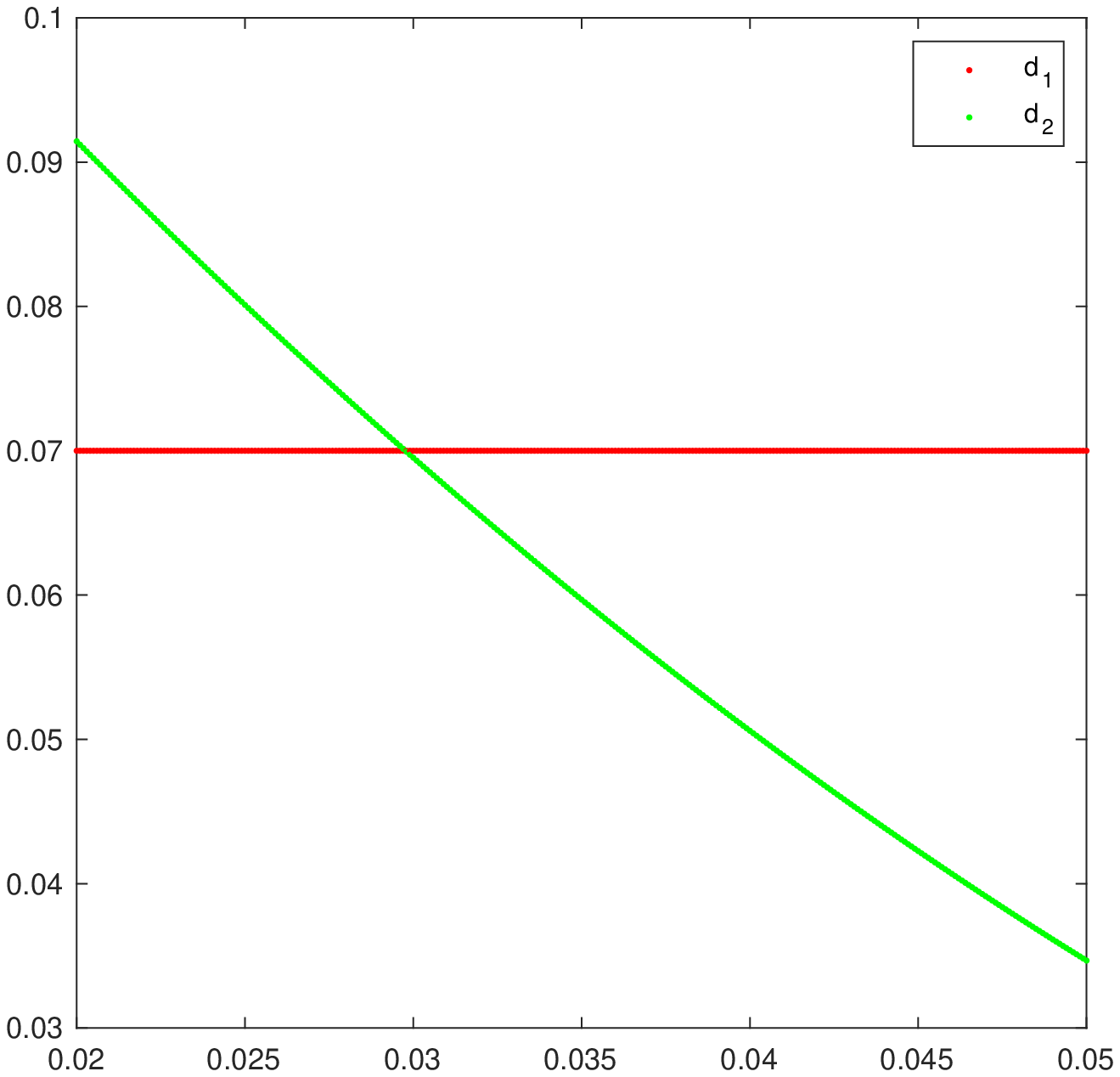}
}
\caption{Example \ref{e3}. Upper bounds for the tridiagonal Toeplitz structured distance 
to symmetric positive semidefiniteness of $T_{(1)}(p)=(15;1;0.05,p,0.05)$ for 
$p=0.02:0.0001:0.05$. In the left graph the upper bounds are determined by Theorem 
\ref{upperb}, whereas in the right graph they are determined by Theorem \ref{upperb1d}.}
\label{fig_ex3}
\end{figure}

\section{Remarks on applications to the solution of linear systems of equations}\label{sec6}
Consider the solution of a linear system of equations
\begin{equation}\label{linsys}
Ax=b,\qquad A\in\R^{n\times n},\quad x,b\in\R^n,
\end{equation}
with a large matrix $A$. This matrix is not required
to have any particular structure, but we assume that $A$ is close to a banded symmetric 
positive definite Toeplitz matrix $T^+$. It is natural to use the matrix $T^+$ as a
preconditioner, because linear systems of equations with a banded symmetric positive 
definite Toeplitz matrix can be solved rapidly and stably by exploiting the Toeplitz 
structure by Schur or generalized Schur algorithms described in 
\cite{AG0,AG,DG,Ka,LMV18}\footnote{The superfast generalized Schur algorithms described in
\cite{AG0,AG} do not exploit bandedness.}, as well as by the method by Bini and Meini 
\cite{BB}. 

When the matrix $A$ is symmetric positive definite, we can solve \eqref{linsys} by the
preconditioned conjugate gradient method using $T^+$ as a preconditioner; see, e.g., 
\cite[Algorithm 11.5.1]{GVL}. If $A$ is symmetric indefinite, then the conjugate gradient 
method should be replaced by the SYMMLQ algorithm; see \cite{PS} for a description of the 
latter. Finally, when $A$ is nonsymmetric, a preconditioned iterative method designed for
the solution of systems of equations with a nonsymmetric matrix should be used, such as
preconditioned GMRES; see \cite{GVL,Saad}. In all these situations, the preconditioned 
iterative methods are simpler when using a symmetric positive definite preconditioner, 
 because Schur and generalized Schur algorithms can be applied to rapidly solve 
linear systems of equations with such a preconditioner matrix.
Though, the application of an indefinite or nonsymmetric preconditioner for the solution of
Toeplitz systems also has been described in the literature; see \cite{CPS,HST,S1}.

%The following very small example illustrates that the closest Toeplitz matrix to a symmetric
%indefinite matrix may be indefinite.

%\begin{exmp}
%We would like to solve the differential equation
%\[
%-d^2y(t)/dt^2=f(t), \qquad 0<t<1,
%\]
%for some function $f$ on a grid with $3$ equidistant grid points $t_j=hj$ for $h=1/2$ and 
%$j=0,1,2$. Let $y_j$ denote the computed approximation of the exact solution $y(t_j)$ at 
%the grid points, $0\leq j\leq 2$. We approximate the second derivative by the finite 
%difference $(-y_0+2y_1-y_2)/h^2$ at $t_1$ and impose suitable Robin boundary conditions, 
%in which the first derivatives are  approximated by first order finite differences at 
%$t=0$ and $t=1$. This gives the symmetric tridiagonal matrix
%\[
%A=\left[\begin{array}{ccc} 
%0.9 & -1 & \phantom{-}0 \\ -1 & \phantom{-}2 & -1 \\ \phantom{-}0 & -1 & -0.9
%\end{array}
%\right].
%\]
%This matrix has two positive and one negative eigenvalue. The closest tridiagonal Toeplitz
%matrix $T$ is symmetric, and has the diagonal entries $38/30$ and the same off-diagonal 
%entries as $A$. The matrix $T$ also has two positive eigenvalues and a negative one. In 
%particular, $T$ is not positive definite. A close SPD Toeplitz matrix $T^+$ can be 
%determined by adding $0.15$ to the diagonal entries of $T$.
%\end{exmp}

Large banded matrices that are close to the set of banded symmetric positive definite 
Toeplitz matrices arise when discretizing second order differential equations in one space
dimension on the interval $0<t<1$ at equidistant grid points using the standard symmetric 
second order 3-point finite difference approximation of the second derivative $-d^2/dt^2$ 
with some boundary conditions. For instance, Neumann boundary conditions give rise to a 
symmetric tridiagonal matrix of the form 
\begin{equation}\label{singmat}
\frac{1}{h^2} (T_{(1)}- e_1 e_1^T-e_n e_n^T),
\end{equation}
where $e_j=[0,\ldots,0,1,0,\ldots,0]^T\in\R^{n}$ denotes the $j$th 
canonical  basis vector, $T_{(1)}=(n,1;-1,2,-1)$, and $h=1/(n+1)$. The matrix $T_{(1)}$ is 
symmetric positive definite, while the matrix \eqref{singmat} is singular. Consistent linear 
systems of equations with the latter matrix have a unique solution $x=[x_1,x_2,\ldots,x_n]^T$ 
such that $\sum_{j=1}^n x_j=0$. The matrix $T_{(1)}$ can be used as a preconditioner. 
Analogous formulas arise in higher space dimensions. 

Other techniques for determining positive definite banded Toeplitz preconditioners with
applications to the solution of partial differential equations are described by Chan 
\cite{Ch} and Hon et al. \cite{HSW}, who apply the Remez algorithm to compute a 
nonnegative low-degree trigonometric polynomial that approximates the symbol associated 
with a given symmetric indefinite Toeplitz matrix. This can be fairly expensive. The 
approach described in the present paper is much cheaper, but may give a banded Toeplitz 
matrix of lower quality. A careful comparison of these approaches to construct 
preconditioners is a topic of future work.

\section{Conclusion}\label{sec8}
This paper discusses the determination of a positive definite banded Toeplitz matrix
that is close to a Toeplitz matrix with the same band structure. A simple fast method
is described.

\section*{Acknowledgment}
The authors would like to thank Greg Ammar and a referee for comments that lead to
clarifications of the presentation.

\end{document}